\newtheorem {theorem}{Theorem}
\newtheorem {lemma}{Lemma}
\newtheorem{remark}{Remark}
\newtheorem {definition}{Definition}
\newtheorem {problem}{Problem}
\newtheorem {corollary}{Corollary}
\newtheorem {proposition}{Proposition}
\newtheorem {conjecture}{Conjecture}
\newtheorem{thmx}{Theorem}
\newtheorem{corx}{Corollary}
\newcommand {\C} {{\mathbb C}}
\newcommand {\N} {{\mathbb N}}
\newcommand {\Z} {{\mathbb Z}}
\newcommand {\R} {{\mathbb R}}
\newcommand {\TT} {{\mathbb T}}
\newcommand {\Td} {{\mathbb T}^{d}}
\begin{document}

\title[Order of Oscillations]{Orders of Oscillation Motivated by Sarnak's Conjecture--Part II}
 
\author{Yunping Jiang}
\address[]{Department of Mathematics\\
 Queens College of the City University of New York\\
 Flushing, NY 11367-1597\\
 and\\
 The Ph.D. Program in Mathematics\\
 Graduate Center of the City University of New York;
 New York, NY 10016}
 \email[Jiang]{yunping.jiang@qc.cuny.edu}
 
 \keywords{oscillating sequence of order $d$, completely oscillating sequence, simple polynomial skew products, MMA and MQDS, linearly disjoint, multi-linearly disjoint}
 
\subjclass[2020]{Primary 37A35, 11K65; Secondary 37A25, 11N05}

\begin{abstract}
 This work is a continuation of~\cite{JPAMS}. We study the linear disjointness between higher-order oscillating sequences and nonlinear dynamical systems. Specifically, we prove that any oscillating sequence of order $m=d+k-1$ and any simple polynomial skew product of degree $k$ on the $d$-Euclidean space are linearly disjoint. Additionally, we demonstrate that any oscillating sequence of order $d$ and any minimal mean attractable and minimal quasi-discrete spectrum dynamical system of order $d$ are linearly disjoint. Finally, we introduce multi-linearly disjoint sequences and construct examples of such sequences.
\end{abstract}

\maketitle

\section{Introduction}

Let $\N=\{ 1, 2, \cdots, n\cdots \}$ be the set of all natural numbers. Let $X$ be a compact metric space and let $C(X)$ be the space of all continuous functions $\phi$ on $X$ equipped with the maximum norm $\|\phi \|=\max_{x\in X} |\phi (x)|$. 

Let $f: X\to X$ be a continuous map. We use $f$ to represent a (discrete) dynamical system $\{f^{n}\}_{n\in \{0\}\cup\N}$, where $f^{n}$ means the $n$-time composition of $f$ and $f^{0}$ means the identity. 

Let $\C$ be the complex plane. Let ${\bf c}=(c_{n})_{n\in \N}$ be a sequence of complex numbers in $\C$. Without loss of generality, we assume that the asymptotic mean of ${\bf c}$ is zero, that is,  
$$
{\mathbb E}({\bf c})=\lim_{N\to \infty} \frac{1}{N} \sum_{n=1}^{N} c_{n} =0.
$$
We also assume that ${\bf c}$ satisfies the following technical condition: there are constants $C>0$ and $\lambda >1$ such that 
 \begin{equation}~\label{cn}
\frac{1}{N} \sum_{n=1}^{N} |c_{n}|^{\lambda} \leq C, \quad \forall\; N\geq 1.
\end{equation}  
An example is the Liouville sequence ${\bf l} =(\lambda (n))_{n\in \N}$ where $\lambda (n) =(-1)^{\Omega (n)}$ and $\Omega (n)$ is the number of prime factors of $n$ (counted by multiplicity).  Another example is the M\"obius sequence ${\bf u} = (\mu (n))_{n\in \N}$, where $\mu (n)=0$ if $p^{2}|n$ for a prime number $p$ and $\mu (n)=\lambda (n)$, otherwise. Note that ${\mathbb E} ({\bf l})={\mathbb E} ({\bf u})=0$ (the prime number theorem). 

\medskip 
\begin{definition}~\label{lld}
We say that ${\bf c}$ and $f$ are (asymptotically) linearly disjoint 
if for any $\phi\in C(X)$,
\begin{equation}\label{ld}
  \lim_{N\to \infty} \frac{1}{N}\sum_{n=1}^N c_n \phi (f^{n}x) =0, \quad \forall\; x\in X.
\end{equation}
\end{definition}

The Sarnak conjecture in number theory~\cite{Sa1,Sa2} states

\medskip
\begin{conjecture}~\label{sc}
The sequence ${\bf u}$ and any continuous dynamical system $f: X\to X$ having zero topological entropy are linearly disjoint.
\end{conjecture}

Motivated by this conjecture, we have defined and studied oscillating sequences in~\cite{FJ}.

\medskip
\begin{definition}~\label{os}
We say that ${\bf c}$ is an oscillating sequence 
if for every $0\leq \theta <1$,
\begin{equation}~\label{hoosseq}
\lim_{N\to \infty} \frac{1}{N} \sum_{n=1}^{N} c_{n}e^{2\pi i n\theta}=0.
\end{equation} 
\end{definition} 

\medskip
\begin{definition}~\label{osas}
We say that ${\bf c}$ is an oscillating sequence in the arithmetic sense if for every $0\leq \theta <1$ and any pair of integers $0\leq l<k$,
\begin{equation}~\label{hossin}
\lim_{N\to \infty} \frac{1}{N} \sum_{1\leq n\leq N, \;\; n\equiv l \pmod{k}} c_{n}e^{2\pi i n\theta}=0.
\end{equation}
 \end{definition}
 
In addition, we define the orders of oscillation of ${\bf c}$ in~\cite{JPAMS}.  
Let $d\geq 2$ be an integer. 

\medskip
\begin{definition}~\label{hooss}
We say that ${\bf c}$ is an oscillating sequence of order $d$ if for every real coefficient polynomial $P$ whose degree is less than or equal to $d$,
\begin{equation}~\label{hoosseq2}
\lim_{N\to \infty} \frac{1}{N} \sum_{n=1}^{N} c_{n}e^{2\pi i P(n)}=0.
\end{equation} 
If (\ref{hoosseq2}) holds for all real coefficient polynomials $P$, then we call ${\bf c}$ a completely oscillating sequence.
\end{definition} 

\medskip
\begin{definition}~\label{hoossas}
We say that ${\bf c}$ is an oscillating sequence of order $d$ in the arithmetic sense if for every real coefficient polynomial $P$ whose degree is less than or equal $d$ and any pair of integers $0\leq l<k$,
\begin{equation}~\label{hossin1}
\lim_{N\to \infty} \frac{1}{N} \sum_{1\leq n\leq N, \;\; n\equiv l \pmod{k}} c_{n}e^{2\pi i P(n)}=0.
\end{equation}
If (\ref{hossin1}) holds for all real coefficient polynomials $P$ and all pairs of integers $0\leq l<k$, then we call ${\bf c}$ a completely oscillating sequence in the arithmetic sense.
 \end{definition}
 
 \medskip
 \begin{proposition}~\label{prop1}
 Definition~\ref{os} and Definition~\ref{osas} are equivalent. And Definition~\ref{hooss} and Definition~\ref{hoossas} are equivalent.  
 \end{proposition}
 
 \begin{proof}
It is clear that Definition~\ref{osas} and Definition~\ref{hoossas} imply Definition~\ref{os} and Definition~\ref{hooss}, respectively,  by taking $l=0$ and $k=1$.

Now, suppose that we have Definition~\ref{hooss} (or Definition~\ref{os}). Then for any real coefficient polynomial 
$Q(x)$, denote 
$$
S_{N}(Q) =\frac{1}{N} \sum_{n=1}^{N} c_{n} e^{2\pi i Q(n)},
$$
we have that 
$$
\lim_{N\to \infty} S_{N}(Q) =0.
$$

For any pair of integers $0\leq l<k$, let 
$$
\omega =e^{2\pi i \frac{1}{k}}.
$$
We use the facts:
$$
 \sum_{j=1}^{k} \omega^{j(n-l)}=k \quad \hbox{if $n-l \equiv 0 \pmod{k}$}
 $$
 and
 $$
 \sum_{j=1}^{k} \omega^{j(n-l)}=0 \quad \hbox{if $n-l \not\equiv 0 \pmod{k}$}.  
 $$
For any real coefficient polynomial $P(x)$, 
$$
\frac{1}{N} \sum_{1\leq n\leq N, n-l \equiv 0 \pmod{k}} c_{n} e^{2\pi i P(n)}=\frac{1}{k}
\frac{1}{N} \sum_{n=1}^{N} c_{n} \Big( \sum_{j=1}^{k} \omega^{j(n-l)}\Big) e^{2\pi i P(n)} 
$$
$$
=\frac{1}{k}\sum_{j=1}^{k} \frac{1}{N}\sum_{n=1}^{N} c_{n} \Big( \omega^{j(n-l)}\Big) e^{2\pi i P(n)}
=\frac{1}{k}\sum_{j=1}^{k}  \Big(\omega^{-jl}\Big) \frac{1}{N}\sum_{n=1}^{N} c_{n} e^{2\pi i (P(n)+\frac{jn}{k})} 
$$
$$
=\frac{1}{k} \sum_{j=1}^{k} \Big( \omega^{-jl}\Big)  S_{N}(Q_{j}),
$$
where 
$$
Q_{j} (x) =P(x) +\frac{xj}{k}, \quad 1\leq j\leq k.
$$
Since 
$$
\lim_{N\to \infty} S_{N}(Q_{j}) =0, \quad \forall\; 1\leq j\leq k,
$$
we have that 
$$
\lim_{N\to \infty} \frac{1}{N} 
\sum_{1\leq n\leq N, n\equiv l \pmod{k}} c_{n} e^{2\pi i P(n)}=
\frac{1}{k} \sum_{j=1}^{k} \Big( \omega^{-jl}\Big) \lim_{N\to \infty} S_{N}(Q_{j}) 
=0.
$$
This completes the proof.
 \end{proof}

\medskip
\begin{remark}~\label{mf}
From Davenport~\cite{Da}, we know that the M\"obius sequence ${\bf u}$ is an oscillating sequence, and from Hua~\cite{Hua}, we know that the M\"obius sequence ${\bf u}$ is a completely oscillating sequence. 
\end{remark}

In~\cite{FJ}, we studied the linear disjointness of oscillating sequences and mean Lyapunov stable (abbreviated MLS) dynamical systems. All MLS dynamical systems have zero topological entropy (see~\cite{LTY}).  In~\cite{JPAMS}, we studied the linear disjointness of oscillating sequences of order $d$ and affine torus maps on the $d$-torus of zero topological entropy for $d\geq 2$ as follows. 

Let $\Z$ and $\R$ mean the sets of integers and real numbers, respectively. Let $d\geq 2$ be an integer. Let $\R^{d}$ be the $d$-Euclidean real vector space and let $\Z^{d}$ be the integer lattice in $\R^{d}$.  Then $\mathbb{T}^{d}=\R^{d}/\Z^{d}$ is the $d$-torus.
 
For ${\bf x}\in \Td$, let ${\bf x}^{t}=(x_{1}, \cdots, x_{d})$ mean the transpose of ${\bf x}$. Let ${\rm GL}(d, \Z)$ be the space 
of all $d\times d-$matrices $A$ of integer entries with determinants $\det(A)=\pm 1$. For any $A\in {\rm GL}(d, \Z)$, the map $A{\bf x}$ is an automorphism of $\Td$. For ${\bf v}\in \Td$ with ${\bf v}^{t} =(v_{1}, \cdots, v_{d})$, we have an affine dynamical system on ${\mathbb T}^{d}$,
\begin{equation}~\label{af}
T_{d, A, {\bf v}} ({\bf x})=A{\bf x}+{\bf v}
\end{equation}
A result of Sinai (see~\cite{Si}) says that $T_{d, A,{\bf v}}$ has zero topological entropy if and only if the absolute values of all eigenvalues of $A$ are $1$; furthermore, a result of Kronecker (see~\cite{K}) implies that in this case, all eigenvalues of $A$ are roots of unity. Therefore, for some positive integer $m$, all the eigenvalues of $A^{m}$ are $1$. Without loss of generality, we only need to consider $A\in {\rm GL}(d,\Z)$ such that all its eigenvalues are $1$. We have 

\medskip
\begin{lemma}~\label{tm}
Suppose $A\in {\rm GL}(d,\Z)$ such that all its eigenvalues are $1$. Then we have a matrix $P\in {\rm GL}(d,\Z)$ whose determinant is $1$ such that
$L=P^{-1}AP\in {\rm GL}(d,\Z)$ is a lower triangular matrix.
\end{lemma}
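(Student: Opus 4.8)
The plan is to read the triangular normal form as the choice of an integral basis of $\Z^{d}$ adapted to a complete flag of $A$-invariant \emph{primitive} sublattices, and to build such a basis by induction on $d$. The structural input is that all eigenvalues of $A$ being $1$ forces the characteristic polynomial to be $(t-1)^{d}$, so $N=A-I$ is nilpotent; in particular $A$ (and $A^{-1}$, which is again unipotent) fixes a nonzero rational vector.

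For $d=1$ there is nothing to prove. For the inductive step I would first choose a primitive integer vector $v\in\Z^{d}$ with $Av=v$; this exists because $\ker(A-I)$ is a nonzero rational subspace and any nonzero rational vector scales to a primitive integer one. Since $v$ is primitive, $\Z v$ is a primitive rank-one sublattice, so the quotient $\Z^{d}/\Z v$ is a free $\Z$-module of rank $d-1$. Because $Av=v$ and $A^{-1}v=v$, the automorphism $A$ descends to an automorphism $\bar A$ of $\Z^{d}/\Z v$, whence $\bar A\in GL(d-1,\Z)$; moreover the factorization $(t-1)^{d}=(t-1)\det(tI-\bar A)$ shows that all eigenvalues of $\bar A$ are again $1$, so the induction hypothesis applies.

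By induction there is $P'\in GL(d-1,\Z)$ of determinant $1$ conjugating $\bar A$ to a lower-triangular matrix; let $\bar e_{1},\dots,\bar e_{d-1}$ be the resulting basis of $\Z^{d}/\Z v$. Lifting these to $e_{1},\dots,e_{d-1}\in\Z^{d}$ and setting $e_{d}=v$ yields an integral basis of $\Z^{d}$, since a basis of $\Z v$ together with lifts of a basis of the torsion-free quotient is always a basis of the total module. Let $P$ have columns $e_{1},\dots,e_{d}$. Then $Ae_{d}=e_{d}$, while for $j\le d-1$ the relation $\bar A\bar e_{j}\in\langle \bar e_{j},\dots,\bar e_{d-1}\rangle$ lifts to $Ae_{j}\in\langle e_{j},\dots,e_{d-1},v\rangle=\langle e_{j},\dots,e_{d}\rangle$, so $P^{-1}AP$ is lower triangular. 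Finally $\det P=\pm1$, and if the sign is $-1$ I replace $e_{1}$ by $-e_{1}$, a conjugation by $\mathrm{diag}(-1,1,\dots,1)$ that preserves lower-triangularity and flips the determinant, giving $\det P=1$.

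The content over $\Q$ is only the elementary fact that a unipotent operator is triangulable; the real work, and the main obstacle, is to keep every step integral. The decisive technical point is the primitivity of $\Z v$: it is exactly this that makes the quotient torsion-free and guarantees that a partial integral basis extends to a genuine basis of $\Z^{d}$, so that the change of coordinates lands in $GL(d,\Z)$ rather than merely in $GL(d,\Q)$. Pinning the determinant at $+1$ is then just the final sign adjustment.
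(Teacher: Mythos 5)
Your proof is correct, and in fact it is more than the paper offers: the paper states Lemma~\ref{tm} without proof, quoting it from the earlier paper \cite{JPAMS}, so there is no in-paper argument to compare against. The proof in that reference is essentially the induction you give --- choose a primitive integer vector $v$ with $Av=v$, use primitivity to see that $\Z^{d}/\Z v$ is free and that $A$ induces an element of $GL(d-1,\Z)$ whose eigenvalues are again all $1$, lift a triangularizing basis of the quotient, and adjust the sign of the determinant at the end. Your write-up correctly isolates the one genuinely arithmetic point, namely that primitivity of $\Z v$ is what makes the quotient torsion-free, hence free, so that the lifted vectors together with $v$ form a $\Z$-basis and the change of basis lies in $GL(d,\Z)$ rather than merely $GL(d,\Q)$; the remaining steps (the block factorization of the characteristic polynomial, the lifting of the triangular relations, and the diagonal conjugation fixing the determinant) are all carried out correctly.
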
 

See~\cite[Lemma 1]{JPAMS} for a proof. 
Thus, without loss of generality, we only need to consider a lower triangular matrix $A\in {\rm GL}(d,\Z)$ whose entries on the main diagonal are $1$. Under this assumption, the map in (\ref{af}) is an affine skew product,
\begin{equation}~\label{asp}
T_{d, A, {\bf v}} ({\bf x})= \left(
\begin{array}{c}
x_{1}+v_{1}\\
x_{2} +b_{21}x_{1}+v_{2}\\
\vdots\\
x_{d}+b_{d(d-1)} x_{d-1} \cdots+b_{d1}x_{1} +v_{d}  
\end{array}\right)
\end{equation}
where $b_{ij}$, $2\leq i\leq d$, $1\leq j\leq i-1$, are integers.
In~\cite{JPAMS}, I proved the following theorem.

\medskip
\begin{thmx}~\label{thma}
Any oscillating sequence ${\bf c}$ of order $d\geq 2$ and any affine skew product $T_{d, A, {\bf v}}$ in (\ref{asp}) are linearly disjoint. 
\end{thmx}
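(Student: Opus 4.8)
The plan is to exploit the lower-triangular skew structure to identify each coordinate of the iterate $T_{A,a}^n({\bf x})$ as a polynomial in $n$, and then to reduce the linear-disjointness statement to the defining property of oscillating sequences of order $d$ applied to the characters of $\Td$.

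First I would prove, by induction on the coordinate index $j$, that the $j$-th coordinate of $T_{A,a}^n({\bf x})$ is a real polynomial in $n$ of degree at most $j$, with coefficients depending on ${\bf x}$, the entries $b_{ji}$, and the shift ${\bf a}$. The base case is immediate: the first coordinate equals $x_1 + n a_1$, which is linear in $n$. For the inductive step, writing $y_j^{(n)}$ for the $j$-th coordinate after $n$ iterations, the skew form (\ref{asp}) yields the recursion $y_j^{(n)} - y_j^{(n-1)} = \sum_{i<j} b_{ji} y_i^{(n-1)} + a_j$; by the inductive hypothesis the right-hand side is a polynomial in $n$ of degree at most $j-1$, so summing the telescoping differences produces a polynomial of degree at most $j$. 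In particular the top coordinate $y_d^{(n)}$ is a polynomial of degree at most $d$, which is exactly what the order $d$ is calibrated to handle.

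Next I would reduce to characters. By the Stone--Weierstrass theorem the trigonometric polynomials are dense in $C(\Td)$, so it suffices to establish (\ref{disjointness}) when $\varphi({\bf y}) = e^{2\pi i \langle {\bf k}, {\bf y}\rangle}$ for a fixed ${\bf k}\in\Z^d$. For such a character one has $\varphi(T_{A,a}^n({\bf x})) = e^{2\pi i \sum_{j} k_j y_j^{(n)}} = e^{2\pi i P(n)}$, where $P(n) = \sum_{j} k_j y_j^{(n)}$ is a real polynomial in $n$ of degree at most $d$ by the previous step. The defining relation (\ref{hoosseq}) for an oscillating sequence of order $d$ then gives $\frac{1}{N}\sum_{n=1}^N c_n e^{2\pi i P(n)} \to 0$ directly.

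Finally I would pass from characters to a general $\varphi$ by a standard approximation argument, which is where the technical condition (\ref{cn}) enters. Given $\varepsilon>0$, choose a trigonometric polynomial $\psi$ with $\|\varphi-\psi\| < \varepsilon$; splitting the average into the $\psi$-part, which tends to $0$ by the character case, and the error $\frac{1}{N}\sum_{n=1}^N |c_n|\,|\varphi - \psi|(T_{A,a}^n({\bf x}))$, which is bounded by $\varepsilon \cdot \frac{1}{N}\sum_{n=1}^N |c_n|$. The technical condition (\ref{cn}) provides a uniform bound on the Cesàro average $\frac{1}{N}\sum_{n=1}^N |c_n|$ (via the power-mean inequality when $\lambda\geq 1$), so the error is $O(\varepsilon)$ and letting $\varepsilon\to 0$ completes the argument. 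I expect the inductive polynomiality claim of the first step to be the substantive part; once the iterates are identified as polynomials of degree at most $d$ in $n$, the remainder is routine harmonic-analysis bookkeeping.
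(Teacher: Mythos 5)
Your proposal is correct and follows essentially the same route as the paper: the paper's proof of Theorem~\ref{main} (which reduces to Theorem~\ref{thma} when $k=1$, and which mirrors the proof in~\cite{JPAMS}) likewise shows inductively that the coordinates of $f^n{\bf x}$ are polynomials in $n$ of degree at most the coordinate index, reduces to characters via trigonometric approximation, and controls the approximation error using the technical condition (\ref{cn}) with $\lambda>1$. The only cosmetic difference is that the paper bounds the error term via H\"older's inequality with the dual exponent $\lambda'$, whereas you bound $|\varphi-\psi|$ pointwise and then invoke the power-mean inequality; these are equivalent uses of (\ref{cn}).
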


From the proof of Theorem~\ref{thma}, we can obtain directly that  

\medskip
\begin{corx}~\label{cora}
Any oscillating sequence ${\bf c}$ of order $d\geq 2$ in the arithmetic sense (equivalently, any oscillating sequence ${\bf c}$ of order $d\geq 2$ (see Proposition~\ref{prop1})) and any affine torus map
$T_{d, A,{\bf v}}$ in (\ref{af}) having zero topological entropy are linearly disjoint.
\end{corx}

We are interested in the following problem. 

\medskip
\begin{problem}~\label{gen}
Are any completely oscillating sequence ${\mathbf c}$ (having positive entropy) and any continuous torus map $f: {\mathbb T}^{d}\to {\mathbb T}^{d}$ having zero topological entropy linearly disjoint?
\end{problem}


\medskip
\medskip
\medskip

\section{A Generalized Notation of Linear Disjointness}~\label{gn}

To demonstrate that all orders of oscillation needed in the study of Problem~\ref{gen}, 
we consider a torus continuous map $f: {\mathbb T}^{d}\to {\mathbb T}^{d}$ having zero topological entropy whose lift $\widetilde{T}$ has the form 
$$
\widetilde{T} ({\bf x})= A{\bf x} +C({\bf x}): {\mathbb R}^{d}\to {\mathbb R}^{d},
$$
where $A\in {\rm GL}(d, \Z)$ with all absolute values of eigenvalues $1$ and $C({\bf x})$ is a continuous periodic map, that is, $C({\bf x}+{\bf 1})=C({\bf x})$ and ${\bf 1}^{t} =(1, \cdots, 1)$. If we consider a polynomial approximation
$P({\bf x})$ of $C({\bf x})$, then we have a continuous map
$$
F ({\bf x})= A{\bf x} +P({\bf x}): {\mathbb R}^{d}\to {\mathbb R}^{d}.
$$
However, $F({\bf x})$ may not induce a continuous torus map.
So, we would like to generalize the definition of linear disjointness.

Let $Y$ be a Polish space, not necessarily compact. 
Suppose $\sim$ is an equivalence relation on $Y$ such that the quotient space $X=Y/\sim$ is a compact metric space.
Let $\pi: Y\to X$ be the natural projection, that is, $\pi (y)=[y]$, where $[y]$ means the equivalence class of $y$.  

For any continuous map $f: Y\to Y$, we have a (discrete) continuous dynamical system $\{f^{n}\}_{n\in \{0\}\cup \N}$ on $Y$. We just use $f$ to mean this dynamical system. 
Let $C(X)$ be the space of all continuous functions on $X$ with the maximum norm $\|\phi\|=\max_{x\in X} |\phi (x)|$. We can have a generalized notation of linear disjointness. 

\medskip 
\begin{definition}~\label{gld}
We say that ${\bf c}$ and a continuous dynamical system $f:Y\to Y$ are (asymptotically) linearly disjoint if for any $\phi\in C(X)$,
\begin{equation}\label{disjointness}
  \lim_{N\to \infty} \frac{1}{N}\sum_{n=1}^N c_n \phi (\pi (f^{n}y)) =0,\quad \forall\; y\in Y.
\end{equation}
\end{definition}

We have a generalized conjecture (refer to Conjecture~\ref{sc}).  

\medskip
\begin{conjecture}~\label{gsc}
The M\"obius sequence ${\bf u}$ and any continuous dynamical system $f: Y\to Y$ having zero entropy are linearly disjoint.
\end{conjecture}

\medskip

\begin{remark}~\label{pl}
In the above conjecture, we use the definition of entropy given in~\cite{DJ} for a continuous dynamical system $f$ from a Polish space $Y$ into itself. When the space $Y=X$ itself is compact, the entropy defined in~\cite{DJ} is the same as the topological entropy. 
\end{remark}

\section{Polynomial skew products.}~\label{psp}

Consider $Y={\mathbb R}^{d}$, $X={\mathbb T}^{d}$.  Then $\pi: {\mathbb R}^{d}\to {\mathbb T}^{d}$ is the universal cover. 
We call a map $f$ a polynomial skew product of degree $k$ if 
\begin{equation}~\label{psp2}
f ({\bf x})= \left(
\begin{array}{c}
x_{1}+ a\\
x_{2} +h_{2} (x_{1})\\
x_{3} +h_{3}(x_{1}, x_{2})\\
\vdots\\
x_{d}+h_{d} (x_{1}, x_{2}, \cdots, x_{d-1})
\end{array}\right)
\end{equation}
where $a$ is a constant and $h_{2} (x_{1})$ is a polynomial of $x_{1}$ of degree $\leq k$, 
$h_{3}(x_{1}, x_{2})$ is a polynomial of $x_{1}$ and $x_{2}$ of degree $\leq k$, $\cdots$, 
$h_{d}(x_{1}, \cdots, x_{d-1})$ is a polynomial of $x_{1}, \cdots, x_{d-1}$ of degree $\leq k$. 
We call $f$ a simple polynomial skew product of degree $k$ if it is in the form
\begin{equation}~\label{psp1}
f ({\bf x})= \left(
\begin{array}{c}
x_{1}+ a\\
x_{2} +h_{2} (x_{1})\\
x_{3} +b_{32}x_{2} +h_{3}(x_{1})\\
\vdots\\
x_{d}+b_{d(d-1)}x_{d-1} \cdots +b_{d2}x_{2}+h_{d} (x_{1})
\end{array}\right)
\end{equation}
where $a$ is a constant and $b_{ij}$, $3\leq i\leq d$, $2\leq j\leq i-1$, are integers and $h_{i} (x_{1})$, $2\leq i\leq d$, are polynomials of $x_{1}$ of degree $\leq k$.
One of the results in this paper, which generalizes Theorem~\ref{thma}, is the following theorem.

\medskip
\begin{theorem}~\label{main}
Suppose $d\geq 2$ and $k\geq 1$. Any oscillating sequence ${\bf c}$ of order $m=d+k-1$ and any simple polynomial skew products $f ({\bf x})$ of degree $k$ in (\ref{psp1}) are linearly disjoint.
\end{theorem}

\medskip
\begin{corollary}~\label{mainc}
Suppose $A\in {\rm GL}(d,\Z)$ with all absolute values of the eigenvalues $1$. Let $q\geq 1$ be the smallest integer such that all the eigenvalues of $A^{q}$ are $1$. Suppose 
$$
f^{q}({\bf x}) =A^{q}{\bf x} +{\bf h} (x_{1})
$$
where ${\bf h}^{t} (x_{1}) =(a, h_{2} (x_{1}), \cdots, h_{d}(x_{1}))$ and $a$ is a constant and $h_{i}(x_{1})$, $2\leq i\leq d$, are polynomials of $x_{1}$ of degree $\leq k$ for $k\geq 1$. 
Then, any oscillating sequence ${\bf c}$ of order $m=d+k-1$ and $f$ are linearly disjoint. 
\end{corollary}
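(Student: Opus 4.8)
The strategy is to pass from the flow $f$ to its $q$-th iterate $g := f^{q}$ by splitting the Ces\`aro average into arithmetic progressions modulo $q$, and then to recognize $g$ as a simple polynomial skew product so that the method behind Theorem~\ref{main} applies; this parallels the affine case of Corollary~\ref{cora}. Since all eigenvalues of $B := A^{q}$ equal $1$, Lemma~\ref{tm} lets me reduce to the case where $B$ is lower triangular with $1$'s on the diagonal. Writing $\mathbf h^{t} = (a, h_{2}(x_{1}), \ldots, h_{d}(x_{1}))$, the $i$-th coordinate of $g(\mathbf x) = B\mathbf x + \mathbf h(x_{1})$ is
\begin{equation*}
x_{i} + \sum_{2\le j<i} B_{ij}x_{j} + \bigl(B_{i1}x_{1} + h_{i}(x_{1})\bigr);
\end{equation*}
absorbing the linear term $B_{i1}x_{1}$ into $h_{i}$ (harmless since $k\ge 2\ge 1$) exhibits $g$ in the simple form~(\ref{psp1}) of degree $k$, with first coordinate the pure rotation $x_{1}\mapsto x_{1}+a$.

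Now fix $\varphi\in C(\Td)$ and $\mathbf x\in\Td$ and decompose the average according to the residue of $n$ modulo $q$. For $n = mq+l$ with $0\le l<q$ one has $f^{n}\mathbf x = g^{m}(f^{l}\mathbf x)$, so with $\mathbf y_{l} := f^{l}\mathbf x$,
\begin{equation*}
\frac1N\sum_{n=1}^{N} c_{n}\,\varphi(f^{n}\mathbf x)
 = \sum_{l=0}^{q-1}\ \frac1N\sum_{\substack{1\le n\le N\\ n\equiv l\ (\mathrm{mod}\ q)}} c_{n}\,\varphi\bigl(g^{(n-l)/q}\mathbf y_{l}\bigr).
\end{equation*}
It suffices to show that each of the $q$ inner averages tends to $0$. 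For a fixed class $l$ I would rerun the proof of Theorem~\ref{main} with the summation restricted to $n\equiv l\ (\mathrm{mod}\ q)$: the only analytic input there is the vanishing of Weyl-type averages, obtained by expanding $\varphi(g^{m}\mathbf y_{l})$ into Fourier modes and applying van der Corput/Weyl differencing. Each mode contributes a phase $e^{2\pi i Q(m)}$, and the skew tower together with the degree-$k$ polynomials $h_{i}$ forces $\deg_{m}Q\le (d-1)+k$; substituting $m=(n-l)/q$ yields $e^{2\pi i\widetilde P(n)}$ with $\deg_{n}\widetilde P\le d+k-1 = m$. Since $\mathbf c$ is oscillating of order $m$ in the arithmetic sense, hypothesis~(\ref{hossin}) with modulus $q$ and residue $l$ gives $\frac1N\sum_{n\equiv l} c_{n} e^{2\pi i\widetilde P(n)}\to 0$, while the uniform bound~(\ref{cn}) controls the trigonometric-polynomial approximation of $\varphi$ exactly as in Theorem~\ref{main}.

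The main obstacle is to make precise the \emph{arithmetic-sense} analogue of Theorem~\ref{main}: one must verify that restricting every Weyl sum in that proof to a progression $n\equiv l\ (\mathrm{mod}\ q)$ leaves all estimates intact, with the plain oscillation hypothesis replaced throughout by~(\ref{hossin}). The accompanying bookkeeping --- checking that iterating the simple skew product $g$ produces phases of degree exactly $d+k-1$ in the iterate count, coordinate $i$ contributing degree at most $k+i-1$ --- is routine but must be carried out. A secondary point is that the reduction to lower-triangular $B$ via Lemma~\ref{tm} should be arranged so that the polynomial part of $g$ continues to depend only on the base coordinate $x_{1}$, keeping $g$ in the simple form~(\ref{psp1}); if the conjugation disturbs the base direction one falls back on the coarser order coming from the general polynomial skew product theorem rather than the sharp value $m=d+k-1$.
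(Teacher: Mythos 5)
Your proposal is correct and is essentially the paper's intended argument: the paper states Corollary~\ref{mainc} without an explicit proof, but the evident derivation --- mirroring how Corollary~\ref{cora} follows from Theorem~\ref{thma} in~\cite{JPAMS} --- is precisely your decomposition of the Ces\`aro average into residue classes modulo $q$, the identification of $g=f^{q}$ (after absorbing $B_{i1}x_{1}$ into $h_{i}$, harmless since $k\geq 2$) as a simple polynomial skew product of degree $k$ in the form (\ref{psp1}) so that the orbit computation in the proof of Theorem~\ref{main} yields phases $e^{2\pi i \widetilde{P}(n)}$ with $\deg \widetilde{P}\leq d+k-1$ on each progression, and the replacement of (\ref{hoosseq}) by the arithmetic-sense hypothesis (\ref{hossin}) with modulus $q$ and residue $l$, while (\ref{cn}) controls the approximation term exactly as before. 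Two small remarks: your mention of van der Corput/Weyl differencing is superfluous, since in the paper's method the polynomial phases are computed exactly and their vanishing in average is the \emph{hypothesis} on ${\bf c}$ rather than something to be proved; and your closing caveat about Lemma~\ref{tm} correctly identifies a subtlety of the statement itself, namely that the corollary must be read as assuming $A^{q}$ is already presented in lower-triangular coordinates, because conjugation by $P\in GL(d,\Z)$ does not preserve the property that the polynomial part depends on $x_{1}$ alone.
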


\begin{proof}[Proof of Theorem~\ref{main}.]
Let ${\bf k}\in \Z^{d}$ with ${\bf k}^{t}=(k_{1}, \cdots,k_{d})$, define
$$
e({\bf k}\cdot {\bf x}) = e^{2\pi i (k_{1}x_{1}+\cdots + k_{d}x_{d})}.
$$
From the Stone-Weierstrass theorem, 
the set 
$S=\Big\{ e({\bf k}\cdot {\bf x})\Big\}_{{\bf k}\in \Z^{d}}$ 
forms a dense subset in $C(\Td)$.
A trigonometric polynomial $p$ is a linear combination of elements in $S$. We can write $p$ as 
$$
p({\bf x}) = \sum_{m_{1}\leq k_{1}\leq s_{1}}\cdots \sum_{m_{d}\leq k_{d}\leq s_{d}} a_{\bf k} e^{2\pi i (k_{1}x_{1}+\cdots +k_{d}x_{d})}.
$$
For any $\phi\in C(\Td)$, we have a sequence of trigonometric polynomials 
\begin{equation}~\label{stp}
p_{q}({\bf x}) = \sum_{m_{1q}\leq k_{1}\leq s_{1q}}\cdots \sum_{m_{dq}\leq k_{d}\leq s_{dq}} a_{{\bf k}, q} e^{2\pi i (k_{1}x_{1}+\cdots +k_{d}x_{d})}.
\end{equation}
such that $\|\phi-p_{q}\|\to 0$ as $q\to \infty$. The sequence $\{p_{q}\}_{q\in\N}$ is called the trigonometric approximation of $\phi$.
Consider
$$
S_{N}\phi ({\bf x}) = \frac{1}{N} \sum_{n=1}^{N} c_{n} \phi(f^{n} {\bf x}).
$$
 
For any $\epsilon>0$, we have an integer $r>0$ such that
$$
\| \phi-p_{r}\| <\frac{\epsilon}{2C^{\frac{1}{\lambda}}}.
$$
Then 
$$
S_{N}\phi ({\bf x}) =\Big( \frac{1}{N} \sum_{n=1}^{N} c_{n} \big( \phi(f^{n} {\bf x}) -p_{r} (f^{n} {\bf x})\big)\Big) +\Big( \frac{1}{N} \sum_{n=1}^{N} c_{n} p_{r} (f^{n} {\bf x})\Big) =I+II.
$$

For the estimation of $I$, we apply the H\"older inequality. Let $\lambda$ and $C$ be the numbers in (\ref{cn}) and $\lambda'>1$ be the dual number of $\lambda$, that is, 
$$
\frac{1}{\lambda} +\frac{1}{\lambda'}=1.
$$ 
Then we have that
$$
|I|\leq \Big( \frac{1}{N} \sum_{l=1}^{N} |c_{n}|^{\lambda}\Big)^{\frac{1}{\lambda}} 
 \Big( \frac{1}{N} \sum_{n=1}^{N} |\phi(f^{n} {\bf x})-p_{r}(f^{n} {\bf x})|^{\lambda'}\Big)^{\frac{1}{\lambda'}}
\leq C^{\frac{1}{\lambda}} \cdot \frac{\epsilon}{2C^{\frac{1}{\lambda}}} =\frac{\epsilon}{2}.
$$

For the estimation of $II$, let ${\bf x}_{n}=f^{n}{\bf x}$ for $n=1, 2,\cdots$.
Denote 
$$
{\bf x}_{n}^{t}=(x_{1}^{n},\cdots, x_{d}^{n}).
$$
Then 
$$
x_{1}^{n} = x_{1} + an
$$
is a polynomial of $n$ of degree less than or equal to $1$. Assume $x_{1}^{0}=x_{1}$. 

Suppose $h_{2} (x_{1}) = \sum_{i=0}^{k} r_{i} \cdot (x_{1})^{i}$. Then
$$
x_{2}^{n} = x_{2} +\sum_{l=0}^{n-1} h_{2}(x_{1}^{l})= x_{2}+\sum_{l=0}^{n-1} \sum_{i=0}^{k} r_{i} \cdot (x_{1}+la)^{i}
$$
$$
= \sum_{l=0}^{n-1} \sum_{i=0}^{k} s_{i} l^{i}
=\sum_{i=0}^{k} s_{i} \sum_{l=0}^{n-1} l^{i}= \sum_{i=0}^{k} s_{i} Q_{i}(n)
$$
where all $s_{i}$ are numbers only depending on $r_{i}$, $k$, $x_{1}$, and $a$ but not on $n$, and
$$
Q_{i}(n) = \sum_{l=0}^{n-1} l^{i}
$$ 
are polynomials of $n$ of degrees $i+1$ for $0\leq i\leq k$. 
Thus, $x_{2}^{n}$ is a polynomial of $n$ of degree less than or equal to $k+1$. Assume $x_{2}^{0}=x_{2}$.

Now
$$
x_{3}^{n} = x_{3} +b_{32} \Big( \sum_{l=0}^{n-1} x_{2}^{l} \Big) +\sum_{l=0}^{n-1} h_{3} (x_{1}^{l}).
$$
Similar to the calculation of $x_{2}^{n}$, we have that $\sum_{l=0}^{n-1} h_{3} (x_{1}^{l})$ is a polynomial of $n$ of degree less than or equal to $k+1$. Since $x_{2}^{n}$ is a polynomial of $n$ of degree less than or equal to $k+1$, $\sum_{l=0}^{n-1} x_{2}^{l}$ is a polynomial of $n$ of degree less than or equal to $k+2$. Thus, $x_{3}^{n}$ is a polynomial of $n$ of degree less than or equal to $k+2$. 
Inductively, we have that $x_{i}^{n}$ is a polynomial of $n$ of degree of $k+i-1$ for $4\leq i\leq d$. Furthermore,
$$
P_{{\bf k}} (n) = k_{1}x_{1}^{n}+\cdots +k_{d}x_{d}^{n}
$$
is a polynomial of $n$ of degree less than or equal to $d+k-1$ and 
$$
p_{r} (f^{n} {\bf x}) =  
\sum_{m_{1r} \leq k_{1}\leq s_{1r}} \cdots \sum_{ m_{dr} \leq k_{d}\leq s_{dr}} a_{{\bf k},r} e^{2\pi i P_{{\bf k}} (n)}.
$$

We can now continue to estimate 
$$
|II| =\Big|\frac{1}{N} \sum_{n=1}^{N} c_{n} \sum_{m_{1r} \leq k_{1}\leq s_{1r}} \cdots \sum_{m_{dr}\leq k_{d}\leq s_{dr}} a_{{\bf k},r} e^{2\pi i P_{{\bf k}} (n)}\Big| 
$$
$$
= \Big |\sum_{m_{1r}\leq k_{1}\leq s_{1r}}\cdots \sum_{m_{dr}\leq k_{d}\leq s_{dr}} a_{{\bf k},r} \frac{1}{N} \sum_{n=1}^{N} c_{n} e^{2\pi i P_{{\bf k}} (n)} \Big|.
$$
Let 
$$
L=\max \{ |m_{1r}|, \cdots, |m_{dr}|, |s_{1r}|, \cdots, |s_{dr}|, |a_{{\bf k},r}| \; ; \; m_{jr} \leq k_{j}\leq s_{jr}, 1\leq j\leq d\}.
$$
Since ${\bf c}$ is an oscillating sequence of order $d+k-1$, we can find an integer $M>r$ such that for $N>M$,
$$
\Big|\frac{1}{N} \sum_{n=1}^{N} c_{n} e^{2\pi i P_{{\bf k}} (n)} \Big|< \frac{\epsilon}{2L^{d}}, \quad \forall\; m_{1r} \leq k_{1}\leq s_{1r}, \cdots, m_{dr}\leq k_{n}\leq s_{dr}.
$$
This implies that 
$$
|II|< \epsilon/2.
$$

Combining the estimations of $I$ and $II$, we get that, for all $N>M$,
$$
|S_{N}\phi ({\bf x})|< \epsilon.
$$
This says that $\lim_{N\to \infty} S_{N}\phi({\bf x}) =0$. 
This completes the proof.
\end{proof}

\begin{proof}[Proof of Corollary~\ref{mainc}]
Using the same proof as that of Theorem~\ref{main}, we can show that any oscillating sequence ${\bf c}$ of order $d\geq 2$ in the arithmetic sense and $f$ are linearly disjoint. Then, using Proposition~\ref{prop1}, we get any oscillating sequence ${\bf c}$ of order $d\geq 2$ and $f$ are linearly disjoint. 
\end{proof}

Theorem~\ref{main} can be generalized to the following theorem.
 
\medskip
\begin{theorem}~\label{main2}
Suppose $f$ is a polynomial skew product of degree $k$ in the form (\ref{psp2}). 
Then there is a positive integer $m=m(d, k, h_{1}, \cdots, h_{d})$ such that any oscillating sequence ${\bf c}$ of order $m$ and $f$ are linearly disjoint. 
\end{theorem}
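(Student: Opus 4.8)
The plan is to follow the architecture of the proof of Theorem~\ref{main} line for line, changing only the single step in which the polynomial degrees of the orbit coordinates are determined. Given $\phi\in C(\Td)$, I would approximate it by trigonometric polynomials $p_{r}$ as in (\ref{stp}), split $S_{N}\phi({\bf x})=I+II$ into the contribution $I$ of $\phi-p_{r}$ and the contribution $II$ of $p_{r}$, and bound $|I|\leq \epsilon/2$ by the H\"older inequality together with the growth condition (\ref{cn}), exactly as before. The entire problem then reduces, as in Theorem~\ref{main}, to showing that for each frequency ${\bf k}$ the exponent appearing in $p_{r}(f^{n}{\bf x})$ is a real-coefficient polynomial $P_{{\bf k},r}(n)$ in $n$ whose degree is bounded by some integer $m$ independent of $N$; once this is known, the oscillation-of-order-$m$ hypothesis forces $|II|<\epsilon/2$ for all large $N$ and the theorem follows.

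So everything comes down to the following claim: writing ${\bf x}_{n}^{t}=(x_{1}^{n},\ldots,x_{d}^{n})$ for the orbit, each coordinate $x_{i}^{n}$ is a polynomial in $n$, and I must control its degree $D_{i}:=\deg_{n}x_{i}^{n}$. I would prove this by induction on $i$. Since ${\bf x}_{n}=f({\bf x}_{n-1})$, the skew-product form (\ref{psp2}) gives the telescoping identity
$$
x_{i}^{n}=x_{i}+\sum_{l=0}^{n-1}h_{i}(x_{1}^{l},\ldots,x_{i-1}^{l}),\qquad 2\leq i\leq d,
$$
with $x_{1}^{n}=x_{1}+na$. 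If $x_{1}^{l},\ldots,x_{i-1}^{l}$ are already known to be polynomials in $l$, then $h_{i}(x_{1}^{l},\ldots,x_{i-1}^{l})$ is a polynomial in $l$, being a polynomial composed with polynomials, and its partial sum over $l$ is again a polynomial in $n$; hence $x_{i}^{n}$ is a polynomial in $n$, closing the induction.

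For the degree bound I note that a monomial of $h_{i}$ of total degree $\leq k$ in $(x_{1},\ldots,x_{i-1})$, evaluated along the orbit, has degree in $l$ at most $k\max_{j<i}D_{j}$, and summation over $l$ raises the degree by one. This yields $D_{i}\leq k\max_{j<i}D_{j}+1$; bounding the $D_{j}$ by the solution of the recursion $\bar D_{i}=k\bar D_{i-1}+1$ with $\bar D_{1}=1$ gives $D_{i}\leq\bar D_{i}=(k^{i}-1)/(k-1)$. In contrast to the simple case of Theorem~\ref{main}, where the coupling between coordinates is linear and the degrees grow only like $D_{i}=i+k-1$, here the nonlinear feedback from the lower coordinates compounds multiplicatively, so the degrees grow essentially exponentially in the coordinate index. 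It therefore suffices to set $m=\max_{1\leq i\leq d}D_{i}$, which is at most $(k^{d}-1)/(k-1)$ and depends only on $d$, $k$ and the degrees of the $h_{i}$, matching the asserted $m=m(d,k,h_{1},\ldots,h_{d})$.

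The main obstacle is precisely this degree computation: one must verify both that the nonlinear feedback does not destroy the polynomiality of $x_{i}^{n}$ in $n$ and that the growth of the degrees, although now exponential in the coordinate index rather than linear, remains bounded independently of $N$. Once the uniform bound $m$ is in hand, each $P_{{\bf k},r}(n)$ is a real polynomial of degree $\leq m$, the estimate of $II$ proceeds verbatim as in the proof of Theorem~\ref{main}, and the conclusion $\lim_{N\to\infty}S_{N}\phi({\bf x})=0$ follows. This also explains why the statement only asserts the existence of some $m$ rather than the clean value $d+k-1$ available in the simple case.
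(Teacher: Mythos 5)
Your proposal is correct and follows essentially the same route as the paper: reduce everything to showing that each orbit coordinate $x_{i}^{n}$ is a polynomial in $n$ of degree bounded independently of $N$, and then run the trigonometric-approximation, H\"older, and oscillation-of-order-$m$ argument of Theorem~\ref{main} verbatim. The one place where you diverge is the degree bookkeeping, and there your version is actually the more careful one: the paper asserts $k_{i}\leq k^{i-1}+1$, which undercounts the nonlinear feedback (for $d=3$, $k=2$, $h_{2}(x_{1})=x_{1}^{2}$, $h_{3}(x_{1},x_{2})=x_{2}^{2}$, the coordinate $x_{3}^{n}$ has degree $7$ in $n$, exceeding $k^{2}+1=5$), whereas your recursion $D_{i}\leq k\max_{j<i}D_{j}+1$ with solution $D_{i}\leq (k^{i}-1)/(k-1)$ is correct, and in this example sharp. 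Since the theorem only asserts the existence of a finite $m=m(d,k,h_{1},\ldots,h_{d})$, the discrepancy is harmless to the paper's conclusion, but your bound is the one to quote.
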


\begin{proof}
Most of the proof is similar to the proof of Theorem~\ref{main}. Here, we outline some different steps. Let ${\bf x}_{n}=f^{n}{\bf x}$ for $n=1, 2, \cdots$ and ${\bf x}_{0}$.
Denote ${\bf x}_{n}^{t}=(x_{1}^{n},\cdots, x_{d}^{n})$. From the proof of Theorem~\ref{main}, we know that $x_{1}^{n}$ and $x_{2}^{n}$ are polynomials of $n$ of degrees less than or equal to $k+2$. 

Let $h_{3} (x_{1}, x_{2}) = \sum_{0\leq i+j\leq k} r_{ij} (x_{1})^{i}(x_{2})^{j}$. Then 
$$
x_{3}^{n} = x_{3} +\sum_{l=0}^{n-1} h_{3} (x_{1}^{l}, x_{2}^{l}),
=x_{3} +\sum_{l=0}^{n-1} \sum_{0\leq i+j\leq k} r_{ij} \cdot (x_{1}^{l})^{i} \cdot (x_{2}^{l})^{j}.
$$

$$
=x_{3} +\sum_{0\leq i+j\leq k}r_{ij} \cdot \Big(\sum_{l=0}^{n-1}  (x_{1}^{l})^{i} (x_{2}^{l})^{j}\Big).
$$
Here $\sum_{l=0}^{n-1}  (x_{1}^{l})^{i} (x_{2}^{l})^{j}$ is a polynomial of $n$. 
Inductively, we have that  
$$
x_{i}^{n} = x_{i} + \sum_{l=0}^{n-1} h_{i}(x_{1}^{l}, \cdots, x_{i-1}^{l})
$$
is a polynomial of $n$ for $4\leq i\leq d$. 

Let $m$ be the maximum of all degrees of $x_{i}^{n}$ for $1\leq i\leq d$. 
The rest of the proof is the same as in the proof of Theorem~\ref{main}

\end{proof}

\medskip
\begin{corollary}~\label{maincg}
Suppose $A\in \hbox{GL}(d,\Z)$ with all absolute values of the eigenvalues $1$. Let $q\geq 1$ be the smallest integer such that all the eigenvalues of $A^{q}$ are $1$. Suppose 
$$
f^{q}({\bf x}) =A^{q}{\bf x} +{\bf h} 
$$
where 
${\bf h}^{t} =(a, h_{2} (x_{1}), h_{3}(x_{1}, x_{2}), \cdots, h_{d}(x_{1}, \cdots, x_{d-1}))$ and $h_{i}(x_{1}, \cdots, x_{i-1})$, $2\leq i\leq d$, are polynomials of $x_{1}, \cdots, x_{i-1}$ degree $\leq k$. 
Then there is a positive integer $m=m(d, k, h_{1}, \cdots, h_{d})$ such that any oscillating sequence ${\bf c}$ of order $m$ and $f$ are linearly disjoint. 
\end{corollary}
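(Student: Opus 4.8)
The plan is to deduce the statement for $f$ from the already-proved linear disjointness for the polynomial skew product $f^{q}$ (Theorem~\ref{main2}), the passage from $f$ to $f^{q}$ being paid for by splitting into the $q$ arithmetic progressions modulo $q$ --- which is precisely what the arithmetic-sense hypothesis (Definition~\ref{hoossint}) is built to handle. First I would note that linear disjointness is invariant under conjugation by $P\in GL(d,\Z)$: if $g=P^{-1}fP$ then $\varphi(f^{n}{\bf x})=(\varphi\circ P)\big(g^{n}(P^{-1}{\bf x})\big)$ with $\varphi\circ P\in C(\Td)$ and $P^{-1}{\bf x}\in\Td$, while the sequence ${\bf c}$ is left untouched; so ${\bf c}\perp f$ iff ${\bf c}\perp g$. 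Applying Lemma~\ref{tm} to the unipotent matrix $A^{q}$, I would then assume $A^{q}$ is lower triangular with $1$'s on the diagonal, so that $f^{q}$ is a polynomial skew product of degree $\le k$ of the form (\ref{psp2}). Theorem~\ref{main2} now yields the order $m=m(d,k,h_{1},\dots,h_{d})$ and, more importantly, the structural fact I shall reuse: for any base point ${\bf y}\in\Td$ the coordinates of $(f^{q})^{m'}{\bf y}$ are polynomials $P_{1}(m'),\dots,P_{d}(m')$ in the iterate count $m'$, of degrees $1,k_{2},\dots,k_{d}$ with $\max\{1,k_{2},\dots,k_{d}\}\le m$, and these degrees do not depend on ${\bf y}$.

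Next, as in the proof of Theorem~\ref{main}, I would fix $\epsilon>0$, approximate $\varphi$ by a trigonometric polynomial $p_{r}$ with $\|\varphi-p_{r}\|<\epsilon/(2C^{1/\lambda})$, and bound the contribution of $\varphi-p_{r}$ by $\epsilon/2$ using H\"older's inequality together with (\ref{cn}). For the remaining term I would split over residues: writing $n=m'q+l$ with $0\le l<q$ and ${\bf y}_{l}=f^{l}{\bf x}$, one has $f^{n}{\bf x}=(f^{q})^{m'}{\bf y}_{l}$, so
\[
\frac1N\sum_{n=1}^{N}c_{n}\,p_{r}(f^{n}{\bf x})=\sum_{l=0}^{q-1}\;\sum_{{\bf k}}a_{{\bf k},r}\,\frac1N\sum_{\substack{1\le n\le N\\ n=m'q+l}}c_{n}\,e^{2\pi i\langle{\bf k},(f^{q})^{m'}{\bf y}_{l}\rangle},
\]
a finite sum over frequencies ${\bf k}$ and residues $l$. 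By the structural fact from Theorem~\ref{main2} each exponent $\langle{\bf k},(f^{q})^{m'}{\bf y}_{l}\rangle$ is a real polynomial in $m'$ of degree $\le m$, and the substitution $m'=(n-l)/q$ rewrites it as a real polynomial $\tilde Q_{{\bf k},l}(n)$ in $n$ of the same degree $\le m$. Thus every innermost average is $\frac1N\sum_{n=m'q+l\le N}c_{n}e^{2\pi i\tilde Q_{{\bf k},l}(n)}$, which tends to $0$ because ${\bf c}$ is oscillating of order $m$ \emph{in the arithmetic sense} for the modulus $q$ and residue $l$ (Definition~\ref{hoossint}). As there are finitely many pairs $({\bf k},l)$, the whole expression tends to $0$, and combining with the $\epsilon/2$ bound gives $S_{N}\varphi({\bf x})\to0$.

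The main obstacle is the first reduction. Theorem~\ref{main2} is proved only for skew products of the form (\ref{psp2}), whose $i$-th coordinate involves $x_{1},\dots,x_{i}$ with the first coordinate a pure translation; this forces $A^{q}$ to be lower triangular unipotent. Triangularizing $A^{q}$ by Lemma~\ref{tm} preserves the linear (affine) structure, as it did for Corollary~\ref{cora}, but it need not preserve the skew form of ${\bf h}$: the conjugated inhomogeneous term $P^{-1}{\bf h}(P\,\cdot)$ can mix all coordinates, so that the $i$-th component of $g^{q}$ may depend on $x_{i},\dots,x_{d}$ as well, destroying the polynomial-in-$m'$ degree bound the whole argument rests on. The delicate point is therefore to justify taking $f^{q}$ in the skew form (\ref{psp2}) --- either by building the triangular normalization into the hypothesis, or by extending the degree analysis of Theorem~\ref{main2} to polynomial maps whose linear part is lower triangular unipotent (without assuming ${\bf h}$ skew). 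Once a single bound $m$ on the $n$-degree is secured uniformly over all residues $l$, all base points ${\bf y}_{l}$, and all frequencies ${\bf k}$, the arithmetic-sense oscillation closes the argument exactly as above; verifying that this one $m$ is independent of $l$ is then the only remaining bookkeeping.
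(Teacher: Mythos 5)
The paper offers no proof of Corollary~\ref{maincg} at all: it is stated bare, immediately after Theorem~\ref{main2}, so the benchmark is the intended reduction, namely the same mechanism by which Corollary~\ref{cora} is deduced from Theorem~\ref{thma} in~\cite{JPAMS}. Your middle paragraph is exactly that argument, and you execute it correctly: approximate $\varphi$ by a trigonometric polynomial and dispose of the error term with H\"older's inequality and (\ref{cn}); write $n=m'q+l$ so that $f^{n}{\bf x}=(f^{q})^{m'}(f^{l}{\bf x})$; use the degree bookkeeping from the proof of Theorem~\ref{main2} (whose bounds $k_{i}\leq k^{i-1}+1$ are uniform in the base point, so one $m$ serves all residues $l$, base points $f^{l}{\bf x}$, and frequencies ${\bf k}$) to see that each exponent ${\bf k}\cdot (f^{q})^{m'}(f^{l}{\bf x})$ is a real polynomial in $m'$ of degree at most $m$; substitute $m'=(n-l)/q$ to keep it a real polynomial in $n$ of the same degree; and apply Definition~\ref{hoossint} with modulus $q$ and residue $l$ to each of the finitely many inner averages.

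The obstacle you raise in your last paragraph is genuine, but it is an imprecision in the statement of the corollary rather than a gap you are obliged to close, and your first proposed remedy is the correct resolution. Unlike the affine case of Corollary~\ref{cora}, where the inhomogeneous term is a constant vector and so keeps its form under conjugation, here conjugating by the $P$ of Lemma~\ref{tm} replaces ${\bf h}({\bf x})$ by $P^{-1}{\bf h}(P{\bf x})$, whose $i$-th entry in general involves all $d$ variables; hence ``without loss of generality $A^{q}$ is lower triangular'' is not available. Nor can the hypothesis be read literally with a non-triangular unipotent $A^{q}$: entries above the diagonal feed higher coordinates back into lower ones, and combined with the nonlinearity of ${\bf h}$ this destroys the polynomial-in-$m'$ structure of orbits on which everything rests (for $d=2$, $A^{q}$ upper triangular unipotent and $h_{2}(x_{1})=x_{1}^{2}$, the lifted recursion gives $\Delta^{2}x_{1}^{(m')}=\bigl(x_{1}^{(m')}\bigr)^{2}$, whose solutions are in general not polynomial in $m'$), so the mechanism of Theorem~\ref{main2} collapses. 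The corollary is therefore provable by your argument exactly under the reading that $A^{q}$ is already lower triangular unipotent in the stated coordinates, i.e.\ that $f^{q}$ is literally of the form (\ref{psp2}); with that reading your first reduction step is unnecessary and the rest of your proof is complete.
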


 \section{Minimal Mean Attractable and Minimal Quasi-Discrete Spectrum Dynamical Systems.}~\label{mmamqds}  
 
 In~\cite{FJ}, we extended the concept of MLS dynamical systems of Fomin~\cite{Fo} to the concept of minimal MLS (abbreviated MMLS) dynamical systems. We also define a new concept called minimal mean attractable (abbreviated MMA) dynamical systems in~\cite{FJ}. We studied the linear disjointness of any oscillating sequence and any MMA and MMLS dynamical system by using these two concepts. In this paper, I will extend the concept of the quasi-discrete spectrum (abbreviated QDS) dynamical systems of Abramov, Hahn, and Parry~\cite{Ab,HP} to the concept of minimal QDS (abbreviated MQDS) dynamical systems of order $d$ and study the linear disjointness of any oscillating sequence of order $d$ and any MMA and MQDS (d) dynamical system.   

 \medskip
Suppose $X$ is a compact metric space. Let $f: X\to X$ be a continuous dynamical system. A subset $K$ of $X$ is called minimal (respective to $f$) if for any $x\in K$, the closure of the forward orbit of $x$ is $K$, that is, 
$$
\overline{\{ f^{n}x\}_{n=0}^{\infty}}=K.
$$

\medskip  
\begin{definition}[MMA]~\label{mma}
Suppose $K\subseteq X$ is minimal. We say $x\in X$ is {\em mean attracted} to $K$ if for any $\epsilon >0$, there is a point $z=z_{\epsilon, x}\in K$ such that
\begin{equation}\label{MA}
  \limsup_{N\to\infty} \frac{1}{N} \sum_{n=1}^{N} d(f^n x, f^n z) <\epsilon.
\end{equation}
The {\em basin of attraction} of $K$, denoted as ${\rm Basin}(K)$, is defined to be the set of all points $x$ that are mean attracted to $K$. 
We call $f$ {\em minimal mean attractable} (abbreviated MMA) if
\begin{equation}\label{Decomp}
  X= \bigcup_{K} \mbox{\rm Basin}(K)
\end{equation}
where $K$ varies among all minimal subsets of $X$.
\end{definition}

\medskip
\begin{remark}
It is clear that
$K \subseteq \hbox{\rm Basin}(K)$. The union in the above definition could be uncountable.
\end{remark}

We use~\cite{HM} as a general reference in reviewing a QDS dynamical system. Suppose $G$ is an Abelian group in multiplication and $\Lambda: G\to G$ is a homomorphism. Consider another homomorphism $\Phi: G\to G$ defined as 
$$
\Phi g :=g\cdot \Lambda g, \quad g\in G.
$$
Then we have that
$$
\Phi^{n} g = \prod_{j=0}^{n} (\Lambda^{j} g)^{\binom{n}{j}}, \quad g\in G.
$$
Let $G_{n} =\ker (\Lambda^{n})$ be a subgroup of $G$. Then, we have a filter 
$$
\{ 1\}=G_{0}\subseteq G_{1}\subseteq \cdots \subseteq G_{n} \subseteq G_{n+1}\subseteq \cdots \subseteq G
$$
and $\Lambda: G_{n}\to G_{n-1}$ for all $n\geq 1$. Note that $\Phi: G_{n}\to G_{n}$ is an automorphism for every $n\geq 1$ (one can check it by using induction).
The homomorphism $\Lambda: G\to G$ is called nilpotent if $G=G_{n}$ for some $n\geq 0$. It is called quasi-nilpotent if $G=\cup_{n=0}^{\infty} G_{n}$.

A triple $(G, \Lambda, \iota)$ is called a signature if $G$ is an Abelian group, $\Lambda: G\to G$ is a quasi-nilpotent homomorphism and 
$$
\iota : G_{1}\to \TT
$$
is an injective homomorphism (i.e., monomorphism), where $G_{1}=\ker(\Lambda)$. The order of the signature $(G, \Lambda, \iota)$ is 
$$
{\rm ord} (G, \Lambda, \iota) := \inf \{ n\in \N\;|\; G=G_{n}\}\in \N\cup \{\infty\}.
$$ 
Note that the order is infinite if $G\not= G_{n}$ for all $n\in \N$, that is, $G$ is not nilpotent.

Now let us return to $f: X\to X$. Suppose $K\subseteq X$ is minimal. Consider the Koopman operator $\Phi: C (K) \to C (K)$ defined as 
$$
\Phi \phi = \phi\circ f, \quad \phi \in C(K). 
$$
Let
$$
G= \{ g\in C(K)\;|\; |g|=1\}.
$$ 
Then, it is an Abelian group under multiplication. Consider the homomorphism $\Lambda: G\to G$ defined as
$$
\Lambda g = \Phi g\cdot \overline{g}, \quad g\in G. 
$$
Then 
$$
\Phi g= g\cdot \Lambda g, \quad g\in G.
$$
One can see that $G_{1} = Fix (\Phi) \cap G$ where $Fix(\Phi)$ is the set of all fixed points of $\Phi$. 
Since $K$ is minimal, we have that $G_{1}={\mathbb T}$ and 
$$
Fix(\Phi) ={\rm span}_{\C} (G_{1})=\C.
$$ 
Let us consider $G_{2}=\ker (\Lambda^{2})$. For any $g\in G_{2}$, we have $\Lambda^{2}g=1$. This implies that 
$$
\Phi^{2} g = g \cdot (\Lambda g)^{2} \cdot\Lambda^{2} g= \Lambda g\cdot \Phi g
$$
Since $\Lambda g\in G_{1}$, $\lambda =\Lambda g$ with $|\lambda|=1$ is a unimodular eigenvalue of $\Phi$ with an unimodular eigenvector $g'=\Phi g\in G_{2}$. Conversely, suppose $\lambda$ is a unimodular eigenvalue of $\Phi$ with a unimodular eigenvector $g$, that is, $\Phi g=\lambda g$ and $|\lambda|=1$. Then $|g|=1 \in G_{1}$ and $\Lambda g =\Phi g\cdot \overline{g}=\lambda g\cdot \overline{g}=\lambda \in G_{1}$. This says $g\in G_{2}$. Thus, $G_{2}$ is the set of all unimodular eigenvectors of $\Phi$. In general, elements in $G_{n}$ are called unimodular quasi-eigenvectors of order $n-1$. We say that $f: K\to K$ is quasi-discrete spectrum (abbreviated QDS) if the linear hull of all unimodular quasi-eigenvectors is dense in $C(K)$, that is, $G=\cup_{n=0}^{\infty} G_{n}$ and 
$$
\overline{{\rm span}_{\C} (G)} = C(K).
$$
We say that $f: K\to K$ is quasi-discrete spectrum of order $d$ (abbreviated QDS(d)) if it is QDS and ${\rm ord} (G, \Lambda, \iota)=d+1$. 

\medskip
\begin{definition}[MMA and MQDS]~\label{mmamqdsd}
We call $f: X\to X$ MMA and MQDS (d) if $f$ is MMA and for every minimal set $K\subseteq X$, $f: K\to K$ is QDS (k)
for some $k\leq d$.
\end{definition}

\medskip
\begin{theorem}~\label{main3}
Any oscillating sequence ${\bf c}$ of order $d\geq 1$ and any MMA and MQDS (d) $f$ are linearly disjoint.
\end{theorem}
 
\begin{proof}
Let $\lambda > 1$ and $C$ be the numbers in (\ref{cn}). Let $\lambda'> 1$ be the dual number of $\lambda$, that is, 
$$
\frac{1}{\lambda} + \frac{1}{\lambda'}=1.
$$

For any $x\in X$ and any $\phi\in C(X)$, we need to prove that (\ref{ld}) holds. 
Since $f$ is MMA, $x\in Basin (K)$ for some minimal subset $K\subseteq X$. 
Assume that we already know that (\ref{ld}) holds for $z\in K$. For an arbitrarily small number $\epsilon >0$, 
the uniform continuity of $\phi$ implies that there is a $\delta >0$ such that 
$$
|\phi(u)- \phi(v)| <\frac{\epsilon}{2^{\frac{1+\lambda'}{\lambda'}} C^{\frac{1}{\lambda}}}
$$ 
whenever $d(u, v)<\delta$.
Since $f$ is MMA, there exists $z=z_{\delta,x}\in K$
such that
  \begin{equation*}\label{MA2}
  \limsup_{N\to\infty} \frac{1}{N} \sum_{n=1}^{N} d(f^n x, f^n z) <\delta^2.
\end{equation*}
Let $E=\{n\ge 1\;|\; d(f^n x, f^n z)\ge \delta\}$. Then the above inequality implies an estimation of the upper density $\overline{D}(E)$ of $E$ in $\N$:
$$
\overline{D}(E)=\limsup_{n\to \infty} \frac{\#(E\cap [1,N])}{N} \le \delta
$$
because
$$
 \delta \sharp (E\cap [1, N]) \le \sum_{n=1}^{N} d(f^n x, f^n z).
$$
Consider 
$$
S_N\phi(x)=\frac{1}{N} \sum_{n=1}^{N} c_{n} \phi (f^{n}x).
$$
Write
$$
   S_N\phi(x) = \Big( S_N\phi(x) - S_N\phi(z)\Big) + S_N\phi(z) =I+II
$$
Since we assume that (\ref{ld}) holds for $z\in K$, we have a $N_{0}>0$ such that 
$$
|II|=|S_N\phi(z)|<\epsilon/2
$$ 
for all $N>N_{0}$. From the H\"{o}lder inequality,  
$$
|I|= |S_N\phi (x) - S_N \phi (z)| \leq \Big( \frac{1}{N} \sum_{n=1}^{N} |c_{n}|^{\lambda} \Big)^{\frac{1}{\lambda}} \Big(\frac{1}{N} \sum_{n=1}^{N} |\phi(f^n x) - \phi(f^n z)|^{\lambda'}\Big)^{1/\lambda'}
$$
$$
\leq C^{\frac{1}{\lambda}} \Big(\frac{1}{N} \sum_{n=1}^{N} |\phi(f^n x) - \phi(f^n z)|^{\lambda'}\Big)^{1/\lambda'}.
 $$
This implies that 
$$
|I|\leq C^{\frac{1}{\lambda}} \Big( \frac{1}{N} \sum_{n\in [1,N]\setminus E} |\phi(f^n x) - \phi(f^n z)|^{\lambda'} +\frac{1}{N} \sum_{n\in E} |\phi(f^n x) - \phi(f^n z)|^{\lambda'}\Big)^{1/\lambda'} 
$$
$$
\leq C^{\frac{1}{\lambda}} \Big( \frac{\epsilon^{\lambda'}}{2^{1+\lambda'} C^{\frac{\lambda'}{\lambda}}} +(2\|\phi\|_{\infty})^{\lambda'}\delta\Big)^{\frac{1}{\lambda'}}.
$$
Here we can take $\delta$ small enough such that 
$$
(2\|\phi\|_{\infty})^{\lambda'}\delta < \frac{\epsilon^{\lambda'}}{2^{1+\lambda'} C^{\frac{\lambda'}{\lambda}}}.
$$ 
So we have that 
$$
|S_N\phi (x)|\leq |I|+|II|\leq \epsilon
$$
for all $N>N_{0}$. We proved (\ref{ld}) under the assumption.

Now we prove the assumption that for any $z\in K$, (\ref{ld}) holds. Since $f: K\to K$ is a QDS (d) flow, we have that 
$$
C(K)=\overline{{\rm span}_{\C} (G_{d+1})}.
$$ 
Similar to the proof of Theorem~\ref{main}, we need only to prove (\ref{ld}) for any $g\in G_{d+1}$ and any $z\in K$.
Since $\Lambda^{d+1}g=1$, For any $n\geq d+1$, we have 
$$
\Phi^{n} g(z) =g\circ f^{n}(z) = \prod_{j=0}^{n} (\Lambda^{j}g (z))^{\binom{n}{j}} = \prod_{j=0}^{d} (\Lambda^{j}g (z))^{\binom{n}{j}}.  
$$
Since $\Lambda^{j} g \in G_{d+1-j}$ for $0\leq j \leq d$, we have $\Lambda^{j} g (z)= e^{2\pi i \theta_{j}}$ for some real number $\theta_{j}\in [0, 2\pi]$. This implies that 
$$
\Phi^{n} g(z) = e^{ 2\pi i \sum_{j=0}^{d} \theta_{j} \binom{n}{j}} =e^{2\pi i P (n)}
$$
where 
$$
P (n) =\sum_{j=0}^{d} a_{j} n^{j}
$$ 
is a real-coefficient polynomial of $n$ whose degree is less than or equal to $d$.
Now 
$$
S_{N} g (z) = \frac{1}{N} \sum_{n=1}^{N} c_{n} g\circ f^{n} =  \frac{1}{N} \sum_{n=1}^{d} c_{n} \Phi^{n}g (z) + \frac{1}{N} \sum_{n=d+1}^{N} c_{n} \Phi^{n} g(z) 
$$
$$
= \frac{1}{N} \sum_{n=1}^{d} c_{n} \Phi^{n}g (z) + \frac{1}{N} \sum_{n=d+1}^{N} c_{n} e^{2\pi i P(n)}.
$$
Since ${\bf c}$ is an oscillating sequence of order $d$,
$$
\lim_{n\to \infty} \frac{1}{N} \sum_{n=d+1}^{N} c_{n} e^{2\pi P(n)}=0.
$$
Therefore, we have that 
$$
\lim_{N\to \infty} S_{N} g (z) =0.
$$
This completes the proof of the assumption and thus, the proof of the theorem.
\end{proof}

\medskip
\begin{remark}
From Theorem~\ref{main}, Theorem~\ref{main2}, and Theorem~\ref{main3}, we can see that a general nonlinear continuous torus map on ${\mathbb T}^{d}$ having zero topological entropy will not be MQDS(d). 
\end{remark}  
  
\section{Multi-Linearly Disjoint Sequences.}~\label{cssec}  

It has been asked whether it is enough to study Sarnak's conjecture just by studying the orders of oscillation sequences. We can construct examples of completely oscillating sequences from automatic sequences such as the Thue-Morse and Rudin-Shapiro sequences and many other automatic sequences not correlated with periodic sequences (see~\cite{A1,BKM}). These examples have zero topological entropy, as they are viewed as dynamical systems. These sequences and themself viewed as dynamical systems are not linearly disjoint. Thus, they are not our main concern in the study of linear disjointness. We are more interested in completely oscillating sequences having positive entropy, such as the M\"obius sequence ${\bf u}$. In~\cite{AJ}, we have constructed completely oscillating sequences other than the M\"obius sequence (but still have positive entropy (refer to Remark~\ref{pl})) as follows. 

Let $\mathcal{C}^{k}_{+}((1,\infty))$ be the space of all positive real-valued $k$-time continuously differentiable
functions on $(1,\infty)$, whose $i$-th derivative is nonnegative for $i\le k$. We proved the following theorem in~\cite{AJ}.  

\medskip
\begin{thmx}~\label{ajthm}
Take $g\in \mathcal{C}^{2}_{+}((1,\infty))$. Then, for a fixed real
number $\alpha\not=0$ and almost all real numbers $\beta>1$ 
(alternatively, for a fixed real number $\beta>1$ and almost all real numbers $\alpha$),
$$
{\bf c} =\big( e^{2\pi i (\alpha \beta^{n}g(\beta))}\big)_{n\in \N}
$$
are completely oscillating sequences.
\end{thmx}

\medskip
We would like to know that 

\medskip
\begin{problem}  
Are any completely oscillating sequence ${\bf c}$ in Theorem~\ref{ajthm} and any dynamical system of zero topological entropy $f: X\to X$ linearly disjoint?
\end{problem}

Given the study of the above problem, we would like to study those multi-linearly disjoint sequences ${\bf c}$.
 
\medskip
\begin{definition}~\label{cs}
We call ${\bf c}$ multi-linearly disjoint if 
\begin{equation}~\label{cse}
\lim_{N\to \infty} \frac{1}{N} \sum_{n=1}^{N} c_{n+l_{1}}^{k_{1}} c_{n+l_{2}}^{k_{2}} \cdots c_{n+l_{r}}^{k_{r}} =0
\end{equation} 
for all $r\geq 1$, all choices of integers $0\leq l_{1} <l_{2}< \cdots <l_{r}$ and $k_{j}\in \N$, $j=1, 2, \cdots, r$, such that not all $c_{n+l_{j}}^{k_{j}} =|c_{n+l_{j}}|^{k_{j}}$ hold.
\end{definition}

\medskip
This definition is motivated by the Chowla conjecture in~\cite{C}.

\medskip
\begin{conjecture}
For any $r$ integers $0\leq l_{1}<l_{2}<\cdots <l_{r}$ and $r$ indexes $i_{1},i_{2}\cdots, i_{r}$ with $i_{k}=1$ or $2$, $1\leq k\leq r$,  but not all $2$, 
$$
\lim_{N\to \infty} \frac{1}{N}\sum_{n=1}^{N} \mu (n+l_{1})^{i_{1}} \mu(n+l_{2})^{i_{2}}\cdots \mu (n+l_{r})^{i_{r}}=0.
$$
\end{conjecture}

Using the same proof as that of Theorem~\ref{ajthm}, we have 

\medskip
\begin{theorem}~\label{mlds}
Take $g\in \mathcal{C}^{2}_{+}((1,\infty))$. Then, for a fixed real
number $\alpha\not=0$ and almost all real numbers $\beta>1$ 
(alternatively, for a fixed real number $\beta>1$ and almost all real numbers $\alpha$),
$$
{\bf c} =\big( e^{2\pi i (\alpha \beta^{n}g(\beta))}\big)_{n\in \N}
$$
are multi-linearly disjoint sequences.
\end{theorem}

To have a self-contained paper, we provide a detailed proof of Theorem~\ref{mlds}. 
Consider 
$$
c_{n+l_{1}}^{k_{1}} c_{n+l_{2}}^{k_{2}} \cdots c_{n+l_{r}}^{k_{r}} = e^{2\pi i \big( \sum_{i=1}^{r} k_{i} \big( \alpha \beta^{n+l_{i}} g(\beta)\big)\big)}.
$$
This says that (\ref{cse}) relates with the uniformly distributed modulo $1$ (abbreviated u.d.$1$) of the sequence 
$$
\Big( \sum_{i=1}^{r} k_{i} \big(\alpha \beta^{n+l_{i}} g(\beta)\big) \Big)_{n\in \N}
$$  
on the unit interval $[0,1]$. For a real number $x$, let $\{x\}=x \mod{1}$ be the fractional part of $x$.
 
\medskip
\begin{definition} 
We say that a sequence ${\bf x}=(x_{n})_{n\in \N}$ of real numbers is u.d.$1$ if for any $0\leq a<b\leq 1$,
we have
$$
\lim_{N\to \infty} \frac{\#(\{ n\in [1,N] \;|\; \{x_{n}\} \in [a,b]\})}{N}=b-a.
$$
 \end{definition}

\medskip
The following Weyl criterion connects (\ref{cse}) and u.d.$1$.

\medskip
\begin{thmx}~\label{wc}
The sequence ${\bf x}=(x_{n})_{n\in \N}$ is u.d.$1$ if and only if 
$$
\lim_{N\to \infty} \frac{1}{N} \sum_{n=1}^{N} e^{2\pi i h x_{n}} =0 \quad \hbox{for all integers $h\not=0$}.
$$
\end{thmx}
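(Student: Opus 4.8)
The plan is to prove both implications by translating between the two equivalent reformulations of u.\,d.\,mod\;$1$: the \emph{interval-counting} formulation built into the definition, and the \emph{averaging} formulation asserting that $\frac{1}{N}\sum_{n=1}^{N} f(\{x_{n}\}) \to \int_{0}^{1} f(t)\,dt$ for a suitable class of $1$-periodic test functions $f$. The integer $h$ is harmless in the exponential because $e^{2\pi i h x_{n}} = e^{2\pi i h \{x_{n}\}}$, so throughout I may work with the fractional parts $\{x_{n}\}$ and regard all functions as $1$-periodic.

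For the forward direction (u.\,d.\,mod\;$1$ $\Rightarrow$ vanishing Weyl sums), I would first observe that the defining property for intervals immediately gives the averaging property for step functions (finite linear combinations of indicators $\mathbf{1}_{[a,b]}$), by linearity. Then, given a continuous $1$-periodic $f$ and $\epsilon > 0$, uniform continuity of $f$ on the compact interval $[0,1]$ yields a step function $s$ with $\|f - s\|_{\infty} < \epsilon$; splitting $\frac{1}{N}\sum_{n=1}^{N} f(\{x_{n}\}) - \int_{0}^{1} f$ into the corresponding step-function average (which converges to $\int_{0}^{1} s$) plus two error terms each bounded by $\epsilon$ gives $\frac{1}{N}\sum_{n=1}^{N} f(\{x_{n}\}) \to \int_{0}^{1} f$. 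Applying this with $f(t) = e^{2\pi i h t}$ and using $\int_{0}^{1} e^{2\pi i h t}\,dt = 0$ for $h \neq 0$ finishes this direction.

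For the converse (vanishing Weyl sums $\Rightarrow$ u.\,d.\,mod\;$1$), the hypothesis together with the trivial case $h = 0$ (where both the average and the integral equal $1$) gives the averaging property for every trigonometric polynomial $P(t) = \sum_{|h|\le H} a_{h} e^{2\pi i h t}$, again by linearity. The Weierstrass approximation theorem for trigonometric polynomials then extends this to all continuous $1$-periodic functions via the same three-term splitting as above. The final and most delicate step is to pass from continuous test functions to the discontinuous indicator $\mathbf{1}_{[a,b]}$: I would sandwich it between continuous $1$-periodic functions $g_{-} \le \mathbf{1}_{[a,b]} \le g_{+}$ with $\int_{0}^{1} (g_{+} - g_{-})\,dt < \epsilon$. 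Writing $A_{N} = \frac{1}{N}\#\{n \le N : \{x_{n}\} \in [a,b]\}$ for the empirical count, the averaging property applied to $g_{\pm}$ gives
$$
\int_{0}^{1} g_{-}\,dt \le \liminf_{N\to\infty} A_{N} \le \limsup_{N\to\infty} A_{N} \le \int_{0}^{1} g_{+}\,dt,
$$
and since both integrals lie within $\epsilon$ of $b-a$, letting $\epsilon \to 0$ forces $\lim_{N} A_{N} = b - a$.

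The main obstacle is precisely this sandwiching step in the converse direction: indicator functions are not continuous, so the averaging property for continuous functions does not apply directly, and one must control the limit from both sides using approximants whose integrals differ by an arbitrarily small amount. Once the squeeze is set up correctly the remainder is routine $\epsilon$-bookkeeping, but care is needed to ensure the approximants are genuinely $1$-periodic and that the inequalities survive the passage to $\liminf$ and $\limsup$.
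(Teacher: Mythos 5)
Your proposal is correct, but there is nothing in the paper to compare it against: the Weyl criterion appears there only as a quoted classical result (Theorem~B), with the proof deferred to the reference by Kuipers and Niederreiter, and no argument of the paper's own. The proof you give — interval-counting implies the averaging property for step functions and hence for continuous $1$-periodic functions in the forward direction, and vanishing Weyl sums plus trigonometric (Weierstrass) approximation plus the two-sided sandwich of $\mathbf{1}_{[a,b]}$ between continuous periodic functions in the converse — is exactly the standard textbook argument found in that cited reference, including the genuinely delicate squeeze needed to pass from continuous test functions back to indicators.
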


The following theorem is essentially~\cite[Theorem 1]{AJ} (that is, take all $l_{i}=0$, $1\leq i\leq r$, in the proof in~\cite{AJ}). 

\medskip
\begin{theorem}~\label{thmuni}
Let us take $g\in \mathcal{C}^{2}_{+}((1,\infty))$. Then, for a fixed real
number $\alpha\not=0$ and almost all real numbers $\beta>1$ 
(alternatively, for a fixed real number $\beta>1$ and almost all real numbers $\alpha$) and for all $r\geq 1$, all choices of integers $0\leq l_{1} <l_{2}< \cdots <l_{r}$ and $k_{i}\in \N$, $i=1, 2, \cdots, r$, sequences 
\begin{equation}~\label{main1eq}
\Big( \sum_{i=1}^{r} k_{i} \big( \alpha \beta^{n+l_{i}}g(\beta)\big) 
\Big)_{n\in \N}
\end{equation}
are u.d.$1$.  
\end{theorem}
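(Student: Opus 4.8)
The plan is to reduce the claim to an exponential-sum estimate via the Weyl criterion (Theorem~\ref{wc}) and then to establish that estimate for almost every value of the parameter by the second-moment (variance) method. Fix a tuple $(r, l_{1}, \dots, l_{r}, k_{1}, \dots, k_{r})$ and an integer $h\neq 0$. Writing
$$
F(\beta)=\alpha\, g(\beta)\sum_{i=1}^{r} k_{i}\beta^{l_{i}},
$$
the $n$-th term of the sequence in (\ref{main1eq}) equals $F(\beta)\beta^{n}$, so by Theorem~\ref{wc} it suffices to prove that
$$
S_{N}(\beta):=\frac{1}{N}\sum_{n=1}^{N} e^{2\pi i h F(\beta)\beta^{n}}\longrightarrow 0\qquad(N\to\infty)
$$
for almost every $\beta$. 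Since $g\in\mathcal{C}^{2}_{+}((1,\infty))$ is strictly positive and the $l_{i}$ are distinct with $k_{i}\in\N$, the function $F$ is $C^{2}$, strictly positive, and bounded below by a positive constant on every compact subinterval $[a,b]\subset(1,\infty)$. Because there are only countably many admissible tuples and countably many nonzero integers $h$, a countable intersection of full-measure sets is again of full measure; hence it is enough to fix the parameters and prove almost-everywhere convergence for that single exponential sum.

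Next I would apply the variance method on a compact subinterval $[a,b]\subset(1,\infty)$. Expanding the square,
$$
\int_{a}^{b} |S_{N}(\beta)|^{2}\, d\beta=\frac{1}{N^{2}}\sum_{m=1}^{N}\sum_{n=1}^{N}\int_{a}^{b} e^{2\pi i h F(\beta)(\beta^{m}-\beta^{n})}\, d\beta .
$$
The $N$ diagonal terms contribute $O(1/N)$. For an off-diagonal term with, say, $m>n$, the phase $\psi(\beta)=h F(\beta)(\beta^{m}-\beta^{n})$ has derivative dominated by $h F(\beta)(m\beta^{m-1}-n\beta^{n-1})$, which on $[a,b]$ is bounded below in absolute value by a constant times $a^{m-2}$; the first-derivative (van der Corput) estimate, in the form $|\int_{a}^{b}e^{2\pi i\psi}|\lesssim (\min|\psi'|)^{-1}+\mathrm{Var}(1/\psi')$, then bounds each such oscillatory integral by a quantity decaying geometrically in $m$, the total-variation correction being controlled because $F\in C^{2}$. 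Summing these geometrically small bounds over all off-diagonal pairs shows that the off-diagonal contribution is negligible, so that $\int_{a}^{b}|S_{N}(\beta)|^{2}\, d\beta=O(1/N)$.

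Finally, to pass from this $L^{2}$ bound to almost-everywhere convergence I would run the standard Davenport--Erd\H{o}s--LeVeque type argument: along the subsequence $N_{j}=j^{2}$ the bound is summable, so $S_{N_{j}}(\beta)\to 0$ almost everywhere by Borel--Cantelli, and for $j^{2}\le N<(j+1)^{2}$ the elementary inequality $|S_{N}-S_{N_{j}}|\le 2|N-N_{j}|/N_{j}\to 0$ fills the gaps so that the full sequence converges. Exhausting $(1,\infty)$ by countably many compact subintervals and intersecting over all parameters completes the fixed-$\alpha$ statement. The fixed-$\beta$ statement is handled identically and is in fact easier, since there the phase is linear in $\alpha$ with the lacunary coefficients $h\,g(\beta)\sum_{i}k_{i}\beta^{n+l_{i}}$, for which the off-diagonal integrals are evaluated explicitly rather than estimated. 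The main obstacle is precisely the off-diagonal oscillatory-integral estimate in the $\beta$ variant: one must verify that the $\beta$-dependence of $F$ does not destroy the large-derivative and bounded-variation hypotheses behind the first-derivative test, and it is exactly here that the regularity $g\in\mathcal{C}^{2}_{+}$ is essential.
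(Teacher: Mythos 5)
Your proposal is correct, but it reaches the conclusion by a genuinely different route than the paper. The paper's proof is built around citing Koksma's metric theorem (Theorem~\ref{kth}): for a fixed tuple it writes the phase as $\alpha G(\beta)\beta^{n}$ with $G=g\cdot\sum_{i=1}^{r}k_{i}\beta^{l_{i}}\in\mathcal{C}^{2}_{+}((1,\infty))$ (the closure property), checks that the functions $y_{n}(x)=G(x)x^{n}$ have derivative differences $y_{n}'-y_{m}'$ that are monotone and uniformly bounded below on compact subintervals, invokes Theorem~\ref{kth} on a countable exhaustion of $(1,\infty)$ by compact intervals, and finishes with the same countable-union-over-tuples argument you use. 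Your argument never cites Koksma: you reprove the needed metric statement from scratch --- Weyl criterion, $L^{2}$ variance expansion, first-derivative test on the off-diagonal oscillatory integrals, Borel--Cantelli along $N_{j}=j^{2}$ plus gap filling --- and this is precisely the standard proof of Theorem~\ref{kth} (as in Kuipers--Niederreiter) specialized to this family; your off-diagonal bound of order $a^{-(m-1)}$ summed over pairs indeed gives $\int_{a}^{b}|S_{N}|^{2}\,d\beta=O(1/N)$, which suffices. So both proofs rest on identical structural facts about the phases (monotone derivative differences with geometric lower bounds); the trade-off is that the paper's proof is much shorter given the reference, while yours is self-contained, makes visible exactly where each hypothesis enters, and handles the fixed-$\beta$, almost-every-$\alpha$ variant by an easier explicit linear-phase computation. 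One point to tighten: you attribute the control of $\mathrm{Var}(1/\psi')$ to ``$F\in C^{2}$'', but smoothness alone does not give this; what makes the first-derivative test work is that $\psi'$ is monotone, i.e.\ $\psi''\ge 0$ (after adjusting for the sign of $h\alpha$), and this holds term by term exactly because $F$ and its first two derivatives are non-negative --- the $\mathcal{C}^{2}_{+}$ hypothesis, which you do flag at the end as essential. With that made explicit, the proof is complete.
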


Thus, Theorem~\ref{mlds} is just a corollary of Theorem~\ref{thmuni} and Teorem~\ref{wc}. 
The following theorem is credited as Koksma's Theorem in~\cite{KN}, which we will use in the proof of Theorem~\ref{thmuni}.

\medskip
\begin{thmx}~\label{kth}
Let $(y_{n}(x))_{n\in \N}$ be a sequence of real-valued $C^{1}$ functions defined on an interval $[a,b]$. 
Suppose $y_{m}'(x)-y_{n}'(x)$ is monotone on $[a,b]$ for any two integers $m\not=n$ 
and suppose 
$$
\inf_{m\not=n}\min_{x\in [a,b]} |y_{m}'(x)-y_{n}'(x)| >0.
$$
Then for almost all $x\in [a,b]$, the sequence ${\bf y} = (y_{n}(x))_{n\in \N}$ is u.d.$1$.
\end{thmx}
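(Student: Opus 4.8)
The plan is to reduce the statement to a second-moment estimate via the Weyl criterion (Theorem~\ref{wc}) and then to run the classical lacunary-subsequence metric argument. Fix a nonzero integer $h$ and set
$$
S_N(x)=\sum_{n=1}^N e^{2\pi i h y_n(x)},
$$
which is continuous, hence measurable, in $x$. By Theorem~\ref{wc} it suffices to prove that for each such $h$ one has $N^{-1}S_N(x)\to 0$ for almost every $x\in[a,b]$; intersecting the countably many full-measure sets obtained as $h$ ranges over $\Z\setminus\{0\}$ then produces a single full-measure set on which $(y_n(x))_{n\in\N}$ is u.\,d.\,mod\;$1$.

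The core is the variance bound
$$
\int_a^b |S_N(x)|^2\,dx=\sum_{m,n=1}^N \int_a^b e^{2\pi i h (y_m(x)-y_n(x))}\,dx ,
$$
whose $N$ diagonal terms contribute $(b-a)N$. For the off-diagonal terms I would invoke the first-derivative van der Corput estimate: if $\phi\in C^1[a,b]$ has $\phi'$ monotone and $|\phi'|\ge\mu>0$, then $\bigl|\int_a^b e^{2\pi i\phi}\bigr|\le 1/(\pi\mu)$, proved by applying the second mean value theorem to $\int_a^b (e^{2\pi i\phi})'/(2\pi i\phi')\,dx$. Taking $\phi=h(y_m-y_n)$, whose derivative is monotone by hypothesis and satisfies $|\phi'|\ge |h|\,\min_x|y_m'-y_n'|$, the pair $(m,n)$ contributes at most $1/\bigl(\pi|h|\min_x|y_m'-y_n'|\bigr)$.

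Here is the observation that I expect to be the main obstacle to set up correctly. The uniform lower bound alone gives each off-diagonal term only $\le 1/(\pi|h|\lambda)$, where $\lambda=\inf_{m\ne n}\min_x|y_m'-y_n'|>0$, so the naive total is $O(N^2)$, which is useless for the conclusion. To do better, note that $y_m'-y_n'$ is continuous and never vanishes, hence has constant sign on $[a,b]$; consequently ``$y_m'<y_n'$ everywhere'' defines a total order on $\{y_n'\}$ that is independent of $x$. Relabelling the functions by this order, consecutive derivatives differ by at least $\lambda$, so telescoping yields $\min_x|y_m'(x)-y_n'(x)|\ge |r(m)-r(n)|\,\lambda$, where $r$ denotes the rank. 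Summing over pairs,
$$
\sum_{m\ne n}\frac{1}{\pi|h|\min_x|y_m'-y_n'|}\le \frac{1}{\pi|h|\lambda}\sum_{i\ne j}\frac{1}{|i-j|}=O\!\left(\frac{N\log N}{|h|\lambda}\right),
$$
so that $\int_a^b|S_N(x)|^2\,dx=O_h(N\log N)$.

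Finally I would convert this bound into almost-everywhere convergence by the standard device. Along the subsequence $N_k=k^2$ one has $\sum_k N_k^{-2}\int_a^b|S_{N_k}(x)|^2\,dx\le C_h\sum_k k^{-2}\log k<\infty$, so by the Beppo Levi theorem $N_k^{-1}S_{N_k}(x)\to 0$ for almost every $x$. For $N_k\le N<N_{k+1}$ the trivial bound $|S_N-S_{N_k}|\le N_{k+1}-N_k=2k+1$ together with $N\ge N_k$ gives $|S_N(x)|/N\le |S_{N_k}(x)|/N_k+(2k+1)/N_k\to 0$, filling the gaps and yielding $N^{-1}S_N(x)\to 0$ almost everywhere for the fixed $h$. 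Intersecting over $h\in\Z\setminus\{0\}$ and applying the Weyl criterion completes the proof.
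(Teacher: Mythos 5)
Your proposal is correct. Note that the paper itself contains no proof of Theorem~\ref{kth}: it refers the reader to Kuipers--Niederreiter~\cite{KN}, and your argument is essentially the classical metric-theorem proof given there — Weyl criterion for each fixed $h\ne 0$, the first-derivative van der Corput bound for $\phi=h(y_m-y_n)$ (whose derivative is monotone by hypothesis and satisfies $|\phi'|\ge |h|\lambda$ pointwise), a second-moment estimate, and the lacunary device along $N_k=k^2$ with the trivial gap-filling bound $|S_N-S_{N_k}|\le 2k+1$. You also correctly isolated the one genuinely nontrivial point, which a naive use of the uniform lower bound misses: since each $y_m'-y_n'$ is continuous and nonvanishing, it has constant sign on $[a,b]$, so pointwise comparison of the derivatives defines a total order independent of $x$; telescoping through consecutive ranks within the first $N$ indices upgrades the pairwise bound $\lambda$ to $|r(m)-r(n)|\,\lambda$, turning the off-diagonal contribution into $O\bigl(N\log N/(|h|\lambda)\bigr)$ instead of the useless $O(N^2)$. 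Two cosmetic remarks only: the precise constant $1/(\pi\mu)$ in the van der Corput estimate is immaterial (any bound of the form $C/\mu$ suffices, and the careful second-mean-value-theorem computation treating real and imaginary parts may yield $2/(\pi\mu)$), and one should observe that the ranking used for $\{1,\dots,N\}$ is the restriction of the global order, so it is consistent as $N$ grows — both points are harmless and your argument stands as written.
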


\begin{proof}[Proof of Theorem~\ref{thmuni}]
Given a choice of $0\leq l_{1} <l_{2}< \cdots <l_{r}$ and $k_{i}\in \N$, $i=1, 2, \cdots, r$, consider the sequence 
 $$
\sum_{i=1}^{r} k_{i} \beta^{n+l_{i}}g(\beta) = \beta^{n}g(\beta) \sum_{i=1}^{r} k_{i} \beta^{l_{i}}=G(\beta) \beta^{n}
$$
where $G(\beta) =g(\beta) \sum_{i=1}^{r} k_{i} \beta^{l_{i}}\in C^{2}_{+}((0, \infty))$ since both $g(\beta)$ and $\sum_{i=1}^{r} k_{i} \beta^{l_{i}}$ are in $C^{2}_{+}((0, \infty))$.
Define a function 
$$
y_n(x)= G(x) x^n , \quad x\in (1, \infty).
$$ 

For $n>m$, we have
$$
y'_n(x)-y'_m(x)= G(x)(n x^{n-1}-m x^{m-1})+G'(x)(x^n-x^m)
$$
Since $G', G''\geq 0$ and since $n x^{n-m}-m\ge n-m\ge 1$ for $x> 1$,
we see that every term in the expression of $y'_n(x)-y'_m(x)$ are in $\mathcal{C}^{1}_{+}([a,\eta])$ 
for any $1<a<\eta<\infty$. 
By the closure property of $\mathcal{C}^{1}_{+}([a,\eta])$, we have that 
\begin{equation}~\label{inc}
y'_n(x)-y'_m(x)\in \mathcal{C}^{1}_{+}([a, \eta]), \quad \forall\; n>m. 
\end{equation}
In particular, this implies that $y'_n-y'_m$ is increasing for $n>m$.
Furthermore, we see that there is a constant $L>0$ such that
\begin{equation}~\label{min}
|y_n'(x)-y'_m(x)|\ge L, \quad \forall\; n>m\in \N, \;\; \forall\; a \leq x\leq \eta. 
\end{equation}
(\ref{inc}) and (\ref{min}) say that the sequence $(y_{n}(x))_{n\in \N}$ of real-valued $C^{1}$ functions satisfies all the hypotheses of Theorem~\ref{kth}. 

Theorem~\ref{kth} implies that for almost all $x$ in $[(2^{k}+1)/2^{k}, (2^{k-1}+1)/2^{k-1}]$ or $[k, k+1]$ for $k\geq 2$, the sequence 
$(y_n(x))_{n\in \N}$ is u.d.$1$. Further, this implies that for almost all 
$$
x\in (1, \infty)= \cup_{k=2}^{\infty} \Big[\frac{2^{k}+1}{2^{k}}, \frac{2^{k-1}+1}{2^{k-1}}\Big]\cup \cup_{k=2}^{\infty} [k, k+1]
$$ 
the sequence 
$(y_n(x))_{n\in \N}$ is u.d.$1$. For a fixed $\alpha >0$ or $\alpha <0$, using similar arguments, we have that for almost all $x\in (1, \infty)$, $(\alpha y_n(x))_{n\in \N}$ is u.d.$1$. 

Let 
$$
A_{r, (l_1,\cdots, l_r), (k_{1}, \cdots, k_{r})}=\{ \beta >1 \ | \ \alpha \beta^n g(\beta)
\sum_{i=1}^{r} k_{i} \beta^{l_{i}}  
\text{ is not u.d.$1$ } \}.
$$
Then the one dimensional Lebesgue measure of $A_{r, (l_1,\cdots, l_r), (k_{1}, \cdots, k_{r})}$ is zero. 
Since the set 
$$
U=\{(r, (l_1,\dots,l_r), (k_{1}, \cdots k_{r})) \ |\ r, k_1, \cdots, k_{r}, l_{2}<\cdots <l_{r}\in \N, l_{1}\in \N\cup\{0\} , l_{1}<l_{2}\}
$$ 
is countable, the one-dimensional Lebesgue measure of
$$
\bigcup_{(r, (l_1,\dots,l_r), (k_{1}, \cdots k_{r})) \in U} A_{r, (l_1,\cdots, l_r), (k_{1}, \cdots, k_{r})}
$$
is zero too.

For a fixed real number $\alpha\neq 0$ in the theorem, take a real number 
$$
\beta\in (1,\infty) \setminus \bigcup_{(r, (l_1,\dots,l_r), (k_{1}, \cdots k_{r})) \in U} A_{r, (l_1,\cdots, l_r), (k_{1}, \cdots, k_{r})}.
$$
This says that the sequence 
$$
\Big( \sum_{i=1}^{r} \alpha \beta^{n+l_{i}} g(\beta)\Big)_{n\in \N}
$$ 
is u.d.$1$ for all 
$$
(r, (l_1,\dots,l_r), (k_{1}, \cdots k_{r}))\in U.
$$
This completes the proof.
\end{proof}

\end{document}